\documentclass[11pt]{article}
\usepackage{amsthm,amsfonts,amsmath,amssymb}
\usepackage{mdwlist}
\usepackage{mathrsfs,tikz}
\usepackage{dsfont}
\usepackage[
    colorlinks=true,
    linkcolor=blue,
    citecolor=blue,
    urlcolor=blue
]{hyperref}
\newtheorem{thm}{Theorem}[section]

\newtheorem{lem}[thm]{Lemma}
\newtheorem{cor}[thm]{Corollary}

\newtheorem{rem}[thm]{Remark}

\numberwithin{equation}{section}

\begin{document}
\title{The Quadratic
Conjecture and $p$-groups of maximal class\thanks{This work was supported by NSFC (No. 12371022 \& 12271318)}}
\author{Jingjing Duan and Lijian An\thanks{Corresponding author. e-mail: anlj@sxnu.edu.cn }\\
Department of Mathematics,
Shanxi Normal University\\
Taiyuan, Shanxi 030031, P. R. China\\
 }

\maketitle

\begin{abstract}
The Quadratic Conjecture  for finite \(p\)-groups, which is equivalent to the Strong Oliver Conjecture, asserts that if a finite \(p\)-group \(G\) acts faithfully on an \(F\)-module \(V\), then the center \(Z(G)\) contains a quadratic element. In this paper, we investigate the structure of maximal class \(p\)-groups and establish that they satisfy the Quadratic Conjecture.

\medskip
 \noindent{\it 2000
Mathematics subject classification:} 20D15.
\end{abstract}

\baselineskip=16pt

\section{Introduction}

The structure of finite $p$-groups and their actions on modules
in characteristic $p$ are fundamental topics in finite group
theory. Finite $p$-groups are also closely connected with fusion
systems and the theory of classifying spaces in algebraic
topology. In their study of $p$-completed classifying spaces,
Broto, Levi, and Oliver developed the theory of $p$-local finite
groups~\cite{BLO2003}. In this framework, saturated fusion
systems encode fusion among subgroups of a finite $p$-group,
while centric linking systems provide the additional categorical
structure needed to construct the associated classifying spaces.
A fundamental question was whether every saturated fusion
system admits an associated centric linking system, unique up
to isomorphism.

In 2004, Oliver introduced a characteristic subgroup
$\mathfrak{X}(S)$ of a finite $p$-group $S$ and formulated
what is now known as Oliver's $p$-group
conjecture~\cite{Oliver2004}. For odd primes, this conjecture
provides a purely group-theoretic sufficient condition for the
vanishing of certain higher derived limits arising in the
obstruction theory for centric linking systems. Oliver showed
that the conjecture would imply the relevant vanishing results
and hence the existence and uniqueness of these linking systems.
Subsequently, Chermak established the existence and uniqueness
of centric linking systems associated with arbitrary saturated
fusion systems, and Oliver presented a proof of this theorem
using obstruction theory~\cite{Chermak2013,Oliver2013}.
Although the original topological problem has thus been
resolved, the underlying $p$-group conjecture remains of
independent interest.

A significant step toward a representation-theoretic approach
was taken by Green, H\'ethelyi, and Lilienthal, who gave a
modular reformulation of Oliver's $p$-group
conjecture~\cite{GHL2008}. Building on this approach, Green,
H\'ethelyi, and Mazza proposed the strong Oliver conjecture
and introduced its module-theoretic counterpart, known as the
\emph{Quadratic Conjecture}~\cite{GHM2011}.

To state the latter, let $G$ be a finite $p$-group and let
$V$ be a faithful $\mathbb{F}_pG$-module. The module $V$ is
called an $F$-module if there exists a nontrivial elementary
abelian subgroup $E\leq G$ such that
\begin{equation}\label{eq:offender}
    |E|\,|C_V(E)|\geq |V|.
\end{equation}
Such a subgroup $E$ is called an \emph{offender}.
A nonidentity element $g\in G$ is \emph{quadratic} on $V$
if $(g-1)^2V=0$, equivalently, if its minimal polynomial
on $V$ is $(X-1)^2$.

The Quadratic Conjecture asserts that whenever $V$ is a
faithful $F$-module for $G$, there exists a quadratic element
\begin{equation}\label{eq:quadratic-conjecture}
    1\neq g\in\Omega_1(Z(G)).
\end{equation}
This conjecture is automatic for $p=2$; for $p=3$,
it agrees with the earlier module-theoretic
reformulation~\cite{GHL2008,GHM2011}.

Several important cases of the Quadratic Conjecture have
been established. Although their 2010 paper was phrased
in terms of the earlier, weaker modular conjecture,
the argument of Green, H\'ethelyi, and Mazza in fact
proves the Quadratic Conjecture for metabelian $p$-groups
and for $p$-groups of nilpotency class at most
four~\cite{GHM2010,GHM2011}.

In their subsequent work, the same authors proved the
Quadratic Conjecture for groups generated by their
abelian normal subgroups and, more generally, for
groups satisfying
\[
    \Omega_1\bigl(Z(\mathcal{Y}(G))\bigr)
    =\Omega_1(Z(G)),
\]
where $\mathcal{Y}(G)$ is the characteristic subgroup
defined in~\cite{GHM2011}.

They also established the conjecture when
$G'\cap\Omega_1(G)$ is abelian, and in particular
for powerful $p$-groups. Furthermore, they verified
it for the Sylow $p$-subgroups of all symmetric
groups and of all general linear groups
$\mathrm{GL}_n(\mathbb{F}_q)$, including both
defining-characteristic and cross-characteristic
cases~\cite{GHM2011}. These results also encompass
certain iterated wreath products of cyclic $p$-groups
and suitable direct products of such groups.

Further advances have clarified the structure and
limitations of possible proofs. For $p\geq5$, Lynd
showed that every faithful $F$-module admits a
quadratic offender $E$ that is normal in its normal
closure, namely,
\[
    E\trianglelefteq\langle E^G\rangle
    \trianglelefteq G
\]
\cite{Lynd2013}.
This is a structural result about quadratic offenders
and does not, by itself, prove the existence of a
central quadratic element.

In a different direction, Green and Lynd constructed
a group $G$ of order $3^{49}$ for which the proposed
Weakly Closed Conjecture fails for a faithful module,
while the Quadratic Conjecture holds for every
faithful $\mathbb{F}_3G$-module~\cite{GreenLynd2013}.
Thus, the Weakly Closed Conjecture is strictly
stronger than the Quadratic Conjecture and cannot
serve as a universal route to its proof.

In our previous work~\cite{DuanAn2025}, we established the
Quadratic Conjecture for certain $p$-groups of maximal
class. More precisely, we proved that the conjecture
holds for every $p$-group of maximal class and order
$p^n$ whenever
\[
    n\leq 8
    \qquad\text{or}\qquad
    n\geq\max\{2p-6,\,p+2\}.
\]
In particular, this settled the Quadratic Conjecture
for all $p$-groups of maximal class when $p\leq 7$.
In the present paper, we remove these restrictions
and establish the Quadratic Conjecture for all
$p$-groups of maximal class.
Our main result is the following.

\begin{thm}\label{thm:maximal_class}
Every finite $p$-group of maximal class satisfies
the Quadratic Conjecture.
\end{thm}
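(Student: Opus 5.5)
Let $G$ be a $p$-group of maximal class of order $p^n$ acting faithfully on an $F$-module $V$. Since $|Z(G)|=p$ when $n\ge 3$, we must show that $Z(G)=\langle z\rangle$ acts quadratically, i.e. $(z-1)^2V=0$. By the results of \cite{DuanAn2025} we may assume $p\ge 11$, $9\le n<\max\{2p-6,p+2\}$. We may also assume $G$ is not metabelian and has class at least $5$, since otherwise the results of \cite{GHM2010,GHM2011} apply. We use the standard invariants of maximal class groups:
\begin{itemize*}
\item the lower central series $G_i=\gamma_i(G)$ for $2\le i\le n$;
\item the two-step centralizer $G_1=C_G(G_2/G_4)$;
\item the degree of commutativity $\ell$, which satisfies $[G_i,G_j]\le G_{i+j+\ell}$.
\end{itemize*}
For $n\le p+1$ the subgroup $G_1$ has exponent $p$ and $\Omega_1(G)$ is large. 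In general $G_1$ is regular, and so is $G$ for $n\le p$.

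\textbf{Step 1: reduce to a quadratic offender inside $G_1$.} Following the Timmesfeld replacement argument used in \cite{GHM2011} (and, for $p\ge 5$, \cite{Lynd2013}), we choose an offender $E$ of minimal order that acts quadratically on $V$. If $E\not\le G_1$, then $E$ contains an element $s\in G\setminus G_1$. Such elements have order $p$ (for $n\le p+1$) and centralizer $\langle s\rangle\times Z(G)$, so $|E|\le p^2$. Quadratic action of $s$ with $|C_V(s)|\ge |V|/p^2$ forces $\dim[V,s]\le 2$. On the other hand, the conjugates of $s$ generate $G$, and the structure of $G/G_2\cong C_p\times C_p$ together with faithfulness then bounds $\dim V$ in terms of $n$. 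This contradicts the size of $V$ forced by faithfulness of a group of class $n-1$: an element of order $p$ whose Jordan blocks all have size at most $2$ cannot sit in a group whose action needs Jordan blocks of size about $n$. Hence we may assume $E\le G_1$.

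\textbf{Step 2: pass to a normal abelian subgroup and apply the $\mathcal{Y}(G)$ criterion.} We next show that $\langle E^G\rangle$ is abelian, or at least that $E$ lies in an abelian normal subgroup $A$ of $G$ with $Z(G)\le A$. Then we run the GHM argument for groups generated by abelian normal subgroups, via $\Omega_1(Z(\mathcal{Y}(G)))=\Omega_1(Z(G))$. The key input is that $E\le G_i$ for some $i$ with $[G_i,G_i]\le G_{2i+\ell}$ small. If $E$ projects nontrivially to $G_1/G_2$, the offender inequality gives a lower bound on $|E|$ relative to $\dim V$. We can then replace $E$ by $E\cap G_2$, or by a suitable $E\cap G_j$, and it remains an offender: Timmesfeld replacement applied to the chain $G_j$ yields $C_E(G_j\cap\dots)$. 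Iterating pushes $E$ into $G_k$ with $2k+\ell\ge n$, so $G_k$ is abelian. Since $G_k$ is normal, Step 2 applies there: a quadratic offender inside the abelian normal subgroup $G_k$ gives, by the argument of \cite{GHM2011} for abelian normal subgroups, a quadratic element in $\Omega_1(Z(G))=Z(G)$.

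\textbf{Main obstacle.} The hard part is the pushing-down step in the range $p+2\le n<2p-6$, where $G_1$ need not have exponent $p$ and the bound $\ell\ge (n-3p+6)/2$ is weak. There, $G_k$ with $2k+\ell\ge n$ may have index larger than the offender can absorb. I would first try to use that $G_1$ is regular, which gives $|\Omega_1(G_j)|=|G_j:\mho_1(G_j)|$ and the identity $\mho_1(G_j)=G_{j+p-1}$. Together these control how much of $E$ survives each replacement. I would then combine this with an explicit Jordan-block count: the faithful action of $G_2$ on the factors $[V,G_2,\dots]$ forces $\dim V\ge n-1$, while quadratic action of $E$ forces $\dim[V,E]\le\dim V/2$. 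The aim is to derive a contradiction unless $E\cap G_k\neq 1$ is still an offender.
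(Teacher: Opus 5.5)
\textbf{There is a genuine gap: this is a strategy, not a proof.} You never show that an offender can be pushed into an abelian normal subgroup (your $G_k$ with $2k+\ell\ge n$). The range $p+2\le n<2p-6$ that you flag as the ``main obstacle'' lies inside the range the earlier results do not cover, so nothing has been gained there. The endpoint of Step 2 is fine: an offender inside an abelian normal subgroup does force a quadratic element of $Z(G)$, via the maximal element of the $j$-maximizing lattice and Timmesfeld replacement. The route to that endpoint, however, is broken.

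\emph{Step 2.} The claim that $E\cap G_j$ is again an offender is unjustified. Timmesfeld replacement produces $C_E([V,E])$, not intersections with lower central terms. Cutting an offender down by a normal subgroup can make $j$ drop below $1$.

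\emph{Step 1.} The conjugates of $s\in G\setminus G_1$ do not generate $G$: since $[s,G]\le G_2$, we have $\langle s^G\rangle\le\langle s\rangle G_2$, which has index $p$. Nothing in your sketch turns $\dim[V,s]\le 2$ into a contradiction. Faithfulness of a group of class $n-1$ forces $\dim V\ge n$, but it says nothing about the Jordan type of an individual element. Also, elements outside $G_1$ need not have order $p$ when $n\le p+1$. In $C_p\wr C_p$, with $\sigma$ generating the top group, $(a,1,\dots,1)\sigma$ has $p$-th power $(a,\dots,a)\neq 1$.

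\textbf{The missing idea.} No case analysis in $n$ and $p$ is needed. Take an inclusion-minimal offender $A$ and set $N=\langle A^G\rangle$. Since $Z(G)$ is the unique minimal normal subgroup, there are two cases.
\begin{itemize}
\item If $C_G(V/C_V(N))\neq 1$, it contains $Z(G)\le N$, and then $[V,z,z]\le[C_V(N),z]=0$ for $z\in Z(G)$.
\item Otherwise $V/C_V(N)$ is faithful. Then $n-1=c(G)\le\dim(V/C_V(N))-1$ gives $j_N(V)\le |N|/|G|\le 1$.
\end{itemize}
In the second case, the largest member of $\operatorname{VCD}(N,V)$ contains $A$ and is $G$-invariant, hence equals $N$. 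So $j_N(V)\ge j_A(V)\ge 1$, which forces $N=G$ and $\mu(G,V)=1$. Consequently $1$, $A$ and every $A^g$ lie in $\operatorname{VCD}(G,V)$.

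If $A\trianglelefteq G$, then $Z(G)\le A$ is quadratic. Otherwise, take $A^g\neq A$. Minimality gives $A\cap A^g=1$, and modularity makes $AA^g$ cover both $A$ and $A^g$. Covers in $\operatorname{VCD}$ are normal, so $[A,A^g]\le A\cap A^g=1$. Hence $G=\langle A^G\rangle$ is abelian, a contradiction.
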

\section{Preliminaries}

 Let $G$ be a finite $p$-group and $V$ a faithful
$\mathbb{F}_pG$-module. For a
subgroup $H\le G$ one sets a {\it j-function} of $H$
$$j_H(V):=\frac{|H||C_V(H)|}{|V|}\in \mathbb{Q}.$$
Note that $j_1(V) = 1$ and an elementary subgroup $E\ne 1$ is an offender if and only if $j_E(V)\ge 1$.
A subgroup $H\leq G$ is called
\emph{quadratic} on $V$ if $[V,H,H]=0$.
Since $V$ is faithful, every quadratic subgroup is
necessarily elementary abelian.
An offender that is quadratic on $V$ is called a
\emph{quadratic offender}.

Timmesfeld's replacement theorem is a fundamental tool in the theory of offenders and $F$-modules. It guarantees the existence of quadratic offenders whenever $V$ is an $F$-module. It therefore plays a central role in the study of the Quadratic Conjecture, where the main difficulty is to establish the existence of a quadratic element in the center of the group.
The following is a special case of Timmesfeld's
replacement theorem
\cite[Theorem~4.2]{ChermakDelgado1989}.

\begin{thm}
\label{thm:replacement}
Let $G$ be a finite $p$-group and $V$ a faithful
$\mathbb{F}_pG$-module. Suppose that $A$ is a
nontrivial abelian subgroup of $G$ satisfying
\[
    j_A(V)\geq j_D(V)
    \quad\text{for all }D\leq A.
\]
Set
\[
    B=C_A([V,A]).
\]
Then $B$ is a nontrivial elementary abelian
subgroup of $A$ satisfying
\[
    j_B(V)=j_A(V)
    \quad\text{and}\quad
    [V,B,B]=0.
\]
Moreover,
\[
    C_V(B)=[V,A]+C_V(A).
\]
\end{thm}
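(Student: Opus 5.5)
The plan is to prove the statement in three stages: first the easy structural properties of $B$, then a lattice (submodularity) inequality for $j$ on subgroups of $A$, and finally the equality $j_B(V)=j_A(V)$. The description of $C_V(B)$ will come out of that last comparison.

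\emph{Quadraticity of $B$ and the containment for $C_V(B)$.} By definition $B$ centralizes $W:=[V,A]$. Since $B\le A$ we have $[V,B]\le W$, and therefore $[V,B,B]\le [W,B]=0$. For $b\in B$ this says $(b-1)^2=0$ on $V$. In characteristic $p$ this gives $(b-1)^p=0$, that is $b^p=1$ on $V$. Faithfulness then shows that $B$ is elementary abelian (it is abelian as a subgroup of $A$). We also get $C_V(B)\supseteq W+C_V(A)$ immediately. It remains to show that $B\neq 1$, that $j_B(V)=j_A(V)$, and that this containment is an equality.

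\emph{Reformulating $B$.} For $d,a\in A$, commutativity of $A$ gives $[v,a,d]=[v,d,a]$. Hence $d\in B$ if and only if $[v,d]\in C_V(A)$ for all $v\in V$. So $B$ is exactly the kernel of the action of $A$ on $\bar V:=V/C_V(A)$, and $A/B$ acts faithfully on $\bar V$. Since $A\neq 1$ acts faithfully on $V$, we have $W\neq 0$. I will also need $C_V(A)\neq V$, and I will extract $B\neq 1$ from the counting below rather than directly.

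\emph{Submodularity.} For $D,E\le A$ one has $C_V(DE)=C_V(D)\cap C_V(E)$ and $C_V(D)+C_V(E)\le C_V(D\cap E)$. Together with $|DE||D\cap E|=|D||E|$, these give
\[
j_D(V)\,j_E(V)\le j_{DE}(V)\,j_{D\cap E}(V).
\]
Combined with the maximality hypothesis, this shows that the set of subgroups of $A$ attaining the maximum value $j_A(V)$ is closed under intersections and products.

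\emph{The equality $j_B(V)=j_A(V)$.} Maximality gives $j_B\le j_A$, which is equivalent to $|C_V(B)/C_V(A)|\le |A/B|$. It therefore suffices to prove the reverse estimate
\[
|A/B|\le |(W+C_V(A))/C_V(A)|.
\]
This yields equality throughout, and in particular $C_V(B)=W+C_V(A)$. It also forces $B\neq1$: otherwise we would get $|A|\le|C_V(1)/C_V(A)|=|V/C_V(A)|$ with equality, i.e.\ $j_1=j_A$, which is compatible with the statement only once the counting is done carefully.

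To prove the reverse estimate, I will choose $v_1,\dots,v_k\in V$ whose images form a basis of $\bar V$ modulo the relevant subspace. I then apply the hypothesis $j_{D}\le j_A$ to the point stabilizers $D_i=C_A(v_i+C_V(A))$ and to their successive intersections. Each step of the chain $A\ge D_1\ge D_1\cap D_2\ge\cdots$ loses an index that is bounded by the dimension of the image of $a\mapsto[v_i,a]+C_V(A)$, and this image lies inside $(W+C_V(A))/C_V(A)$. The chain terminates at $B$. Multiplying the resulting inequalities should produce the reverse estimate.

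This telescoping is the step I expect to be the main obstacle. The difficulty is arranging the chain so that the images contributed at different steps are independent inside $W+C_V(A)$ and are not overcounted. If this cannot be arranged, my fallback is the argument of Chermak--Delgado. There one picks a minimal maximizer $A_0\le A$, using the closure property from the submodularity step, and shows directly that $A_0\le B$ and that $BA_0$ is still a maximizer, which gives $j_B=j_A$.
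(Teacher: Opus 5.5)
The paper gives no proof of this statement; it quotes it from Chermak--Delgado. Judged on its own, your proposal has a genuine gap at exactly the point you flag: the reverse estimate $|A/B|\le|(W+C_V(A))/C_V(A)|$ is never established.

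In your telescoping plan, the bound on each index $|D_1\cap\cdots\cap D_{i-1}:D_1\cap\cdots\cap D_i|$ comes from orbit--stabilizer alone. The hypothesis $j_D\le j_A$ cannot supply the independence you are missing. It only says $|C_V(D)/C_V(A)|\le|A:D|$, which is a \emph{lower} bound on indices, while you need an upper bound on $|A:B|$.

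The fallback is circular. If $A_0\le B$ then $BA_0=B$, so ``$BA_0$ is still a maximizer'' is just the claim $j_B=j_A$, and you give no argument for it.

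Your remark on $B\neq 1$ is also not an argument. The equality $j_1=j_A$ is not contradictory, since $j_A=1$ is allowed.

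The missing idea is to show that each point stabilizer is \emph{itself} a maximizer, and then use the intersection-closure you already proved.
\begin{itemize}
\item For $v\in V$ put $D_v=C_A(v+C_V(A))$. The same commutation $[v,d,a]=[v,a,d]$ you used for $B$ shows $D_v=C_A([v,A])$.
\item The $A$-orbit of $v+C_V(A)$ lies in a coset of $([v,A]+C_V(A))/C_V(A)$. Moreover $D_v$ centralizes $[v,A]+C_V(A)$. Hence $|A:D_v|\le|([v,A]+C_V(A))/C_V(A)|\le|C_V(D_v)/C_V(A)|$.
\item This means $j_{D_v}\ge j_A$, so $j_{D_v}=j_A$ and $C_V(D_v)=[v,A]+C_V(A)$.
\item Since $W=\sum_v[v,A]$, we have $B=\bigcap_v D_v$. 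Closure of maximizers under intersection then gives $j_B=j_A$.
\item In your submodularity inequality, equality forces $C_V(D\cap E)=C_V(D)+C_V(E)$. Inducting over the finitely many $D_v$ gives $C_V(B)=\sum_v C_V(D_v)=W+C_V(A)$.
\item Finally $B\neq 1$. Otherwise $V=W+C_V(A)$, so $W=[W,A]=[W,A,A]=\cdots=0$ by nilpotence of the $p$-group action, contradicting faithfulness.
\end{itemize}
With this in hand your chain does telescope with equality at every step, but only a posteriori. The independence you were looking for is a consequence of the maximizer property, not a route to it. Your treatment of quadraticity, the elementary abelian property, and the containment $C_V(B)\supseteq W+C_V(A)$ is fine as written.
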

In particular,
by Timmesfeld's replacement theorem, a minimal offender must be a quadratic offender.

We shall make use of the following $j$-inequality
of Meierfrankenfeld and Stellmacher
\cite[Lemma~2.6]{MS2006}; see also
\cite[Lemma~4.4]{GHM2011}.
For any two subgroups $H,K\leq G$, we have
\begin{equation}\label{eq:j-inequality}
    j_{\langle H,K\rangle}(V)
    j_{H\cap K}(V)
    \geq j_H(V)j_K(V),
\end{equation}
with equality if and only if
\[
    \langle H,K\rangle=HK
    \quad\text{and}\quad
    C_V(H\cap K)=C_V(H)+C_V(K).
\]

Define
\[
    \mu(G,V)=\max\{j_H(V)\mid H\leq G\},
\]
and let
\[
    \operatorname{VCD}(G,V)
    =\{H\leq G\mid j_H(V)=\mu(G,V)\}.
\]
\begin{rem}\label{rem:mu}
If $V$ is an 
$F$-module, then it is clear that $\mu(G,V)\ge 1$.
\end{rem}

The following results describe the lattice structure
of $\operatorname{VCD}(G,V)$ and its relationship
with special offenders.

\begin{thm}\label{thm:VCD-offender}
Let $G$ be a finite $p$-group and $V$ a faithful
$\mathbb{F}_pG$-module.
Suppose that $H\in\operatorname{VCD}(G,V)$
and that $A$ is a nontrivial subgroup of $G$
satisfying
\[
    j_A(V)\geq j_B(V)
    \qquad\text{for every }B\leq A.
\]
Then
\[
    \langle H,A\rangle=HA
    \in\operatorname{VCD}(G,V)
\]
and
\[
    j_{H\cap A}(V)=j_A(V).
\]
Moreover,
\[
    C_V(H\cap A)=C_V(H)+C_V(A).
\]
\end{thm}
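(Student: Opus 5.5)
We apply the $j$-inequality \eqref{eq:j-inequality} to the pair $H$ and $A$, which gives
\[
    j_{\langle H,A\rangle}(V)\, j_{H\cap A}(V)\;\geq\; j_H(V)\, j_A(V).
\]
Two upper bounds then force everything to be an equality. First, since $H\in\operatorname{VCD}(G,V)$, we have $j_{\langle H,A\rangle}(V)\le \mu(G,V)=j_H(V)$. Second, $H\cap A\le A$, so the hypothesis on $A$ gives $j_{H\cap A}(V)\le j_A(V)$.

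Multiplying these two bounds gives
\[
    j_{\langle H,A\rangle}(V)\, j_{H\cap A}(V)\;\le\; j_H(V)\, j_A(V).
\]
Combined with the $j$-inequality, this is an equality. All $j$-values are positive rationals, since $C_V(K)$ always contains $0$ and so $|C_V(K)|\ge 1$. The product of two positive quantities, each bounded above, attains the product of the bounds only if both factors attain their bounds. Hence
\[
    j_{\langle H,A\rangle}(V)=\mu(G,V)
    \quad\text{and}\quad
    j_{H\cap A}(V)=j_A(V).
\]
The first equation says $\langle H,A\rangle\in\operatorname{VCD}(G,V)$, and the second is the claimed equality of $j$-values.

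Because equality holds in \eqref{eq:j-inequality}, its equality clause applies. It yields $\langle H,A\rangle=HA$ and $C_V(H\cap A)=C_V(H)+C_V(A)$. Therefore $HA=\langle H,A\rangle$ lies in $\operatorname{VCD}(G,V)$, which completes all three assertions.

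There is no real obstacle here: the whole argument rests on the Meierfrankenfeld–Stellmacher inequality, including its equality characterization, which we are allowed to assume. The only point needing care is the positivity of the $j$-values, which is what makes the squeezing argument legitimate.
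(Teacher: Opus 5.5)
Your proof is correct and is essentially the paper's argument: you squeeze the $j$-inequality for $H$ and $A$ between the bounds $j_{\langle H,A\rangle}(V)\le\mu(G,V)=j_H(V)$ and $j_{H\cap A}(V)\le j_A(V)$, then invoke the equality clause. Your remark that all $j$-values are positive, which justifies splitting the product equality into the two factor equalities, makes explicit a point the paper leaves implicit.
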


\begin{proof}
Set $\mu=\mu(G,V)$.
Since $H\in\operatorname{VCD}(G,V)$,
we have
\[
    j_H(V)=\mu.
\]
By the definition of $\mu$,
\[
    j_{\langle H,A\rangle}(V)\leq\mu.
\]
On the other hand, since $H\cap A\leq A$,
the hypothesis on $A$ gives
\[
    j_{H\cap A}(V)\leq j_A(V).
\]
Applying the $j$-inequality to $H$ and $A$,
we obtain
\[
\begin{aligned}
    \mu j_A(V)
    &=j_H(V)j_A(V)\\
    &\leq j_{\langle H,A\rangle}(V)
           j_{H\cap A}(V)\\
    &\leq \mu j_A(V).
\end{aligned}
\]
Therefore
\[
    j_{\langle H,A\rangle}(V)=\mu
\]
and
\[
    j_{H\cap A}(V)=j_A(V).
\]
In particular,
$\langle H,A\rangle\in\operatorname{VCD}(G,V)$.

Moreover, equality holds in the
$j$-inequality, which implies
\[
    \langle H,A\rangle=HA
\]
and
\[
    C_V(H\cap A)=C_V(H)+C_V(A).
\]
This completes the proof.
\end{proof}

\begin{thm}\label{thm:VCD-lattice}
Let $G$ be a finite $p$-group and $V$ a faithful
$\mathbb{F}_pG$-module. Then
$\operatorname{VCD}(G,V)$ is a modular sublattice
of the subgroup lattice of $G$.
More precisely, for any
$H,K\in\operatorname{VCD}(G,V)$, we have
\[
    H\cap K,\quad
    \langle H,K\rangle=HK
    \in\operatorname{VCD}(G,V),
\]
and
\[
    C_V(H\cap K)=C_V(H)+C_V(K).
\]
Moreover, $\operatorname{VCD}(G,V)$ has
a unique minimal element and a unique maximal
element, both of which are normal in $G$.
\end{thm}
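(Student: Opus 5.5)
We apply the $j$-inequality \eqref{eq:j-inequality} directly to $H,K\in\operatorname{VCD}(G,V)$, in the same way as in the proof of Theorem~\ref{thm:VCD-offender}. Set $\mu=\mu(G,V)$. By definition of $\mu$, both $j_{\langle H,K\rangle}(V)\le\mu$ and $j_{H\cap K}(V)\le\mu$. Hence
\[
    \mu^2=j_H(V)j_K(V)\le j_{\langle H,K\rangle}(V)\,j_{H\cap K}(V)\le\mu^2 .
\]
So equality holds throughout. Since each factor is at most $\mu$, this forces $j_{\langle H,K\rangle}(V)=j_{H\cap K}(V)=\mu$, so both subgroups lie in $\operatorname{VCD}(G,V)$. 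Equality in \eqref{eq:j-inequality} then gives $\langle H,K\rangle=HK$ and $C_V(H\cap K)=C_V(H)+C_V(K)$. Thus $\operatorname{VCD}(G,V)$ is closed under meets and joins in the subgroup lattice, so it is a sublattice.

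For modularity, the join of $H$ and $K$ in the sublattice is the set product $HK$, so we use Dedekind's law. If $H\le L$ with $H,K,L\in\operatorname{VCD}(G,V)$, then $HK\cap L=H(K\cap L)$. The right side is the join $\langle H,K\cap L\rangle$ in the sublattice, so the modular identity holds. Alternatively, one could note that any lattice of subgroups whose pairwise joins are permutable products is modular.

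For the extremal elements, $\operatorname{VCD}(G,V)$ is nonempty, since the maximum of the $j$-function over the finitely many subgroups of $G$ is attained. It is also finite. Taking the intersection $H_0$ of all its members and the join $H_1$ of all its members, the closure properties just proved (iterated finitely many times) give $H_0,H_1\in\operatorname{VCD}(G,V)$. These are clearly the unique minimal and unique maximal elements.

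Normality follows from $G$-invariance. For $g\in G$ we have $C_V(H^g)=C_V(H)g$ and $|H^g|=|H|$, so $j_{H^g}(V)=j_H(V)$. Hence $G$ permutes $\operatorname{VCD}(G,V)$ by conjugation. This action preserves the unique minimal and maximal elements, so $H_0^g=H_0$ and $H_1^g=H_1$ for all $g$. The main point needing care is the check $C_V(H^g)=C_V(H)g$ for a right module: $vg\in C_V(H^g)$ if and only if $v\in C_V(H)$, which is routine. The rest is a direct equality-chasing argument.
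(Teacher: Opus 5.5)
Your proof is correct and is essentially the paper's argument. The paper gets closure under $\cap$ and joins by applying Theorem~\ref{thm:VCD-offender} with $A=K$ (a member of $\operatorname{VCD}(G,V)$ automatically satisfies $j_K(V)\ge j_B(V)$ for all $B\le K$), which is the same $j$-inequality squeeze you carry out directly; it then uses Dedekind's law for modularity and conjugation invariance for normality, just as you do.
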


\begin{proof} Since $K\in \operatorname{VCD}(G,V)$, $K$ satisfying
\[
    j_K(V)\geq j_B(V)
    \qquad\text{for every }B\leq K.
\]
By Theorem \ref{thm:VCD-offender},
$\langle H,K\rangle=HK
    \in\operatorname{VCD}(G,V)$
and
\[
    j_{H\cap K}(V)=j_K(V).
\]
Thus both $\langle H,K\rangle$ and $H\cap K$
belong to $\operatorname{VCD}(G,V)$. Moreover,
\[
    C_V(H\cap A)=C_V(H)+C_V(A).
\]

In particular, any two members of
$\operatorname{VCD}(G,V)$ permute.
The modularity of $\operatorname{VCD}(G,V)$
now follows from Dedekind's modular law.

Since $G$ is finite and
$\operatorname{VCD}(G,V)$ is closed under
intersections and joins, it has a unique
minimal element and a unique maximal element.

Finally, for every $g\in G$ and $H\leq G$,
we have
\[
    j_{H^g}(V)=j_H(V).
\]
Hence $\operatorname{VCD}(G,V)$ is invariant
under conjugation by $G$. Its unique minimal
and maximal elements are therefore normal
in $G$.
\end{proof}

\begin{cor}\label{cor:VCD-offender}
Let $G$ be a finite $p$-group, and $V$ an 
$F$-module.
Suppose that $M$ is the maximal element in $\operatorname{VCD}(G,V)$, and $A$ is an inclusion minimal offender. Then $A^G\le M$.
\end{cor}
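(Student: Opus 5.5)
We want $A^g\le M$ for every $g\in G$. The plan is to apply Theorem~\ref{thm:VCD-offender} with $H=M$ and with $A$ playing the role of the subgroup there. Since $M$ is normal in $G$ by Theorem~\ref{thm:VCD-lattice}, it then suffices to show $A\le M$; conjugating gives $A^g\le M^g=M$ for all $g$, hence $A^G\le M$.

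First we check the hypothesis of Theorem~\ref{thm:VCD-offender}, namely $j_A(V)\ge j_B(V)$ for every $B\le A$. Since $A$ is an offender, it is a nontrivial elementary abelian subgroup with $j_A(V)\ge 1$. Let $B\le A$.
\begin{itemize}
\item If $B=1$, then $j_B(V)=1\le j_A(V)$.
\item If $1\ne B<A$, then $B$ is again nontrivial elementary abelian. By the inclusion-minimality of $A$, $B$ is not an offender, so $j_B(V)<1\le j_A(V)$.
\item If $B=A$, the inequality is trivial.
\end{itemize}
So the hypothesis holds.

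Theorem~\ref{thm:VCD-offender} now gives $MA=\langle M,A\rangle\in\operatorname{VCD}(G,V)$. Every member of $\operatorname{VCD}(G,V)$ is contained in the unique maximal element $M$; for instance, by Theorem~\ref{thm:VCD-lattice} the join $\langle M,MA\rangle$ lies in $\operatorname{VCD}(G,V)$, and maximality of $M$ forces $MA\le M$. Hence $A\le MA=M$. Combined with the normality of $M$ from the first paragraph, this gives $A^G\le M$.

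The only point needing care is the hypothesis check in the second paragraph, specifically that proper nontrivial subgroups of $A$ satisfy $j_B(V)<1$. This uses both that subgroups of elementary abelian groups are elementary abelian and the minimality of $A$. The trivial subgroup is handled separately via $j_1(V)=1$, and this case is where the offender condition $j_A(V)\ge 1$ is used. Note that Remark~\ref{rem:mu} is not needed here.
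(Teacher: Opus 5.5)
Your proposal is correct and follows the paper's own proof: you derive the hypothesis of Theorem~\ref{thm:VCD-offender} from the minimality of $A$, obtain $MA\in\operatorname{VCD}(G,V)$, conclude $A\le M$ by maximality, and pass to $A^G\le M$. The only difference is that you spell out two steps the paper leaves implicit, namely the case check on $B\le A$ and the use of $M\trianglelefteq G$ from Theorem~\ref{thm:VCD-lattice}.
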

\begin{proof} Since $A$ is inclusion minimal, $j_A(V)\geq j_B(V)$
for every $B\leq A$. By Theorem \ref{thm:VCD-offender}, $MA\in \operatorname{VCD}(G,V)$. By the maximality of $M$, $A\le M$. Hence $A^G\le M$.
\end{proof}
Let $\Lambda$ be a lattice. For $X,Y\in\Lambda$,
we write $X\prec Y$ if $X<Y$ and there is no
$Z\in\Lambda$ such that $X<Z<Y$.
In this case, we say that $Y$ covers $X$ in $\Lambda$.

\begin{thm}\label{thm:VCD-cover}
Let $G$ be a finite $p$-group and $V$ a faithful
$\mathbb{F}_pG$-module. If $H$ covers $K$ in
$\operatorname{VCD}(G,V)$, then $K\unlhd H$.
\end{thm}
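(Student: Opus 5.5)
The plan is to use the conjugation invariance of $\operatorname{VCD}(G,V)$ together with its closure under intersections and products (Theorem~\ref{thm:VCD-lattice}). Take $h\in H$. Since $j_{K^h}(V)=j_K(V)=\mu(G,V)$, we have $K^h\in\operatorname{VCD}(G,V)$. Also $K\le H$ gives $K^h\le H^h=H$. I want to show $K^h=K$ for every $h\in H$, which is exactly $K\unlhd H$.

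Suppose $K^h\neq K$. By Theorem~\ref{thm:VCD-lattice}, the product $KK^h$ is a subgroup lying in $\operatorname{VCD}(G,V)$. It satisfies
\[
    K\le KK^h\le H,
\]
and $KK^h\neq K$, since otherwise $K^h\le K$ and hence $K^h=K$ by equality of orders. Because $H$ covers $K$, there is no member of $\operatorname{VCD}(G,V)$ strictly between them, so $KK^h=H$.

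It remains to rule out $KK^h=H$ with $K^h\ne K$; this is the main point. Here I use the standard fact that a finite $p$-group (indeed any finite group) is never the product of two conjugates of a proper subgroup. Concretely, if $H=KK^h$, write $h^{-1}=k\,(h^{-1}k'h)$ with $k,k'\in K$. Then
\[
    h^{-1}=kh^{-1}k'h ,
\]
and rearranging (multiply on the left by $h$ and on the right by $h^{-1}$) gives $1=hkh^{-1}k'$, so $hkh^{-1}\in K$. From this one gets
\[
    K^h=K^{hk^{-1}}\cdot\ldots ;
\]
rather than chase elements, I will use the cleaner argument: $H=KK^h$ implies $H=K^xK^{xh}$ for all $x$, and counting double cosets shows $H=KhK$ meets... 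I would instead argue directly. The set $KK^h=Kh^{-1}Kh$, so $H=Hh^{-1}=Kh^{-1}K$ is a single $(K,K)$-double coset. Then $H$ equals the double coset $K1K=K$, since distinct double cosets are disjoint and $1\in H$. This contradicts $K<H$.

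Hence $K^h=K$ for all $h\in H$, that is, $K\unlhd H$. The only delicate step is this double-coset observation, and it is elementary.
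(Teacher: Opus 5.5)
Your proof is correct, but it reaches the contradiction by a different route than the paper.

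The paper uses the normalizer condition in the $p$-group $H$. With $N=N_H(K)<H$, it picks $x\in N_H(N)\setminus N$, so that $K^x\neq K$ but $K^x\le N$. The join $\langle K,K^x\rangle$ then lies in $\operatorname{VCD}(G,V)$ strictly between $K$ and $H$, because it stays inside the proper subgroup $N$.

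You instead take an arbitrary $h\in H$ with $K^h\neq K$ and let the covering hypothesis force $KK^h=H$. You then rule this out with the double coset observation $H=Hh^{-1}=Kh^{-1}K$. Since $1\in H$, this forces $Kh^{-1}K=K1K=K$, i.e. $H=K$ (equivalently $h\in K$), a contradiction.

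What each approach buys:
\begin{itemize}
\item Your argument uses no nilpotence of $H$ and needs no special choice of conjugate. Every conjugate $K^h\neq K$ with $h\in H$ already yields an intermediate member of the lattice.
\item The fact you rely on holds in any group: a group is never the product of a proper subgroup and a conjugate of it by an element of that group.
\item The paper's argument is tailored to $p$-groups. In exchange, it never needs this auxiliary fact, because the join is kept inside $N_H(K)$ from the start.
\end{itemize}

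For the write-up, delete the abandoned middle attempts. The element chase contains a slip: from $h^{-1}=kh^{-1}k'h$ one gets $h^{-1}=hkh^{-1}k'$, not $1=hkh^{-1}k'$. The lines ending in ellipses are unfinished fragments. The final double-coset paragraph alone suffices.
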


\begin{proof}
Suppose, for a contradiction, that $K$ is not normal
in $H$. Set $N=N_H(K)<H$. Since $H$ is a finite
$p$-group, the normalizer condition yields
\[
    N<N_H(N).
\]
Choose $x\in N_H(N)\setminus N$. Then
$K^x\leq N$, but $K^x\neq K$.

Since $\operatorname{VCD}(G,V)$ is invariant under
conjugation and closed under joins, we have
\[
    \langle K,K^x\rangle\in\operatorname{VCD}(G,V).
\]
Consequently,
\[
    K<\langle K,K^x\rangle\leq N<H,
\]
contrary to the assumption that $H$ covers $K$
in $\operatorname{VCD}(G,V)$.
Therefore $K\unlhd H$.
\end{proof}

To prove our main theorem, we need the following lemma.
%

\begin{lem}\label{lem:class}{\cite[Corollary~9.21]{Lam2001}}
Let $G$ be a finite $p$-group and $V$ a faithful
$\mathbb{F}_pG$-module of dimension $m\geq 1$.
Then
\[
    c(G)\leq m-1,
\]
where $c(G)$ denotes the nilpotency class of $G$.
\end{lem}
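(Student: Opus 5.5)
We will prove the bound by embedding $G$ in a group of unitriangular matrices and tracking how commutators shift a $G$-stable flag of $V$.

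\emph{Step 1: a $G$-stable flag.} We first show that $V$ has a flag
\[
0=V_0<V_1<\cdots<V_m=V,\qquad \dim V_i=i,
\]
with $[V_i,G]\le V_{i-1}$ for all $i$. The key input is that a finite $p$-group acting on a nonzero finite $\mathbb{F}_p$-space fixes a nonzero vector. This holds because the orbits of $G$ on $V\setminus\{0\}$ have $p$-power sizes, while $|V\setminus\{0\}|=p^m-1\not\equiv 0 \pmod p$, so some orbit has size one. Let $V_1$ be the span of such a fixed vector. Then $G$ acts on $V/V_1$, and induction on dimension lifts a flag of $V/V_1$ to the required flag of $V$. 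The action on $V/V_1$ need not be faithful, but the argument above does not use faithfulness. Setting $\delta_g=g-1\in\operatorname{End}(V)$, the flag condition says $\delta_g V_i\le V_{i-1}$ for every $g\in G$.

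\emph{Step 2: lower central terms shift the flag by their weight.} We prove by induction on $k\ge 1$ that
\[
\delta_x V_i\le V_{i-k}\qquad\text{for all } x\in\gamma_k(G),
\]
where $V_j=0$ for $j\le 0$. The case $k=1$ is Step 1. Let $\mathcal{S}_k$ be the set of $x\in G$ with $\delta_xV_i\le V_{i-k}$ for all $i$.

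First, $\mathcal{S}_k$ is a subgroup. This follows from
\[
\delta_{xy}=\delta_x+\delta_y+\delta_x\delta_y
\qquad\text{and}\qquad
\delta_{x^{-1}}=-x^{-1}\delta_x.
\]
Here we use that every group element preserves each $V_i$.

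Second, for $x\in\gamma_k(G)$ and $g\in G$, we use the identity
\[
[x,g]-1=x^{-1}g^{-1}(xg-gx)=x^{-1}g^{-1}(\delta_x\delta_g-\delta_g\delta_x).
\]
The right-hand side maps $V_i$ into $V_{i-k-1}$. So every generator $[x,g]$ of $\gamma_{k+1}(G)$ lies in $\mathcal{S}_{k+1}$. Since $\mathcal{S}_{k+1}$ is a subgroup, it follows that $\gamma_{k+1}(G)\le\mathcal{S}_{k+1}$.

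\emph{Step 3: conclusion.} Taking $k=m$ gives $\delta_xV=\delta_xV_m\le V_0=0$ for every $x\in\gamma_m(G)$. Thus $\gamma_m(G)$ acts trivially on $V$, and faithfulness forces $\gamma_m(G)=1$. Hence $c(G)\le m-1$.

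The only mildly delicate point is Step 2. One must check that the commutator identity, combined with each element preserving the flag, really yields the extra shift by one. One must also check that $\mathcal{S}_{k+1}$ is closed under products and inverses, so that it contains all of $\gamma_{k+1}(G)$ and not just the generating commutators.
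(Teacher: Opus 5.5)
Your proof is correct and complete. The fixed-point count gives the $G$-stable flag, the subgroup $\mathcal{S}_k$ together with the identity $[x,g]-1=x^{-1}g^{-1}(\delta_x\delta_g-\delta_g\delta_x)$ gives $\gamma_k(G)\le\mathcal{S}_k$, and faithfulness then forces $\gamma_m(G)=1$.

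The paper gives no proof of this lemma, only the citation to Lam. The standard argument behind that citation is the one you give. The image of $G$ in $\mathrm{GL}_m(\mathbb{F}_p)$ is unipotent, so it can be conjugated into the unitriangular group, which has class $m-1$. Your Step~1 does this triangularization by the mod-$p$ orbit count rather than by the general theorem for unipotent groups (Kolchin), which works over any field. Your Steps~2--3 are the usual computation of the class of the unitriangular group.
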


\section{The proof of Theorem \ref{thm:maximal_class}}

Let \(G\) be a \(p\)-group of maximal class, \(V\) an \(F\)-module. Assume \(G\) does not satisfy the quadratic conjecture. Let \(A\) be an inclusion minimal offender in \(G\) and $N = \langle A^G \rangle$. Let
\begin{center}$U=V/C_V(N)$ and $K=C_G(U)$.
\end{center}
If \(K \neq 1\), then $Z(G)\le K$ since $G$  is of maximal class. The same reason gives that $Z(G)\le N$. Hence, for \(1 \neq z\in Z(G)\), $[V,z,z]\leq [C_V(N),z]=1.$ This contradicts the fact that there is no quadratic element in $\Omega_1(Z(G))$. 
Hence \(K=1\). That is, $U$ is a faithful
$\mathbb{F}_pG$-module. By Lemma \ref{lem:class}, \(C(G) \le \dim U-1\). Thus $|G|\le p^{\dim U}$. By calculation,
\begin{equation}\label{eq:maximal}
j_N(V)=\frac{|N||C_V(N)|}{|V|}=\frac{|N|}{p^{\dim U}}\le\frac{|N|}{|G|}\le 1.
\end{equation}
Let \(M\) be the maximal element in \(\operatorname{VCD}(N,V)\). By Corollary \ref{cor:VCD-offender}, $N\le M$. That is, \(N\) is the maximal element in \(\operatorname{VCD}(N,V)\). By Remark \ref{rem:mu}, $j_N(V)\ge 1$. By inequality \ref{eq:maximal},
\begin{center} $G=N$ and $j_N(V)=1$.
\end{center}
By the definition of \(\operatorname{VCD}(G,V)\), $A\in \operatorname{VCD}(G,V)$.

If $A\trianglelefteq G$, then $Z(G)\le A$. Since $A$ is inclusion minimal, by Theorem \ref{thm:replacement}, $A$ is quadratic. Hence $Z(G)$ is also quadratic, which contradicts that there is no quadratic element in $\Omega_1(Z(G))$. Hence $A$ is not normal in $G$.

For all \(g \in G\) such that \(A^g \neq A\), we have $A, A^g\in \operatorname{VCD}(G,V)$. Hence both $AA^g$ and $A\cap A^g$ are in $A, A^g\in \operatorname{VCD}(G,V)$. Since \(A\) is inclusion minimal, \[A \cap A^g = 1.\]
The minimality of $A$ also imply that both $A$ and $A^g$ cover $1$. Since \(\operatorname{VCD}(G,V)\) is modular, $AA^g$ covers both $A$ and $A^g$.
By Theorem \ref{thm:VCD-cover}, both $A$ and $A^g$ are normal in $AA^g$. Hence \([A, A^g] = A \cap A^g = 1\). 
By the arbitrariness of $g$, $G=N=A^G$ is abelian, which contradicts that $G$ is of maximal class.

%
%
%

\section*{Acknowledgments}

The authors acknowledge the assistance of ChatGPT
(OpenAI) in improving the presentation of this paper,
including English-language editing, LaTeX preparation,
and discussions of mathematical arguments and references.
The authors take full responsibility for all mathematical
results, proofs, and citations.

\end{document}